\newtheorem{thm}{Theorem}[section]
\newtheorem{prop}[thm]{Proposition}
\theoremstyle{remark}
\newcommand{\tdef}[1]{\textcolor{blue}{\emph{#1}}}
\newcommand{\horiz}{\begin{center}\rule{0.3\textwidth}{0.5pt}\end{center}}
\newcommand{\id}{\textrm{id}}
\renewcommand{\S}{\mathrm{S}}
\newcommand{\len}{\operatorname{len}}
\newcommand{\lmax}{\operatorname{lmax}}
\newcommand{\rmax}{\operatorname{rmax}}
\newcommand{\slmax}{\operatorname{slmax}}
\newcommand{\asc}{\operatorname{asc}}
\newcommand{\des}{\operatorname{des}}
\newcommand{\srecl}{\operatorname{sldes}}
\newcommand{\size}{\operatorname{size}}
\newcommand{\tails}{\operatorname{tails}}
\newcommand{\lsize}{\operatorname{lsize}}
\newcommand{\rsize}{\operatorname{rsize}}
\newcommand{\fin}{\operatorname{fin}}
\newcommand{\symg}{\mathfrak{S}}
\title{Fighting fish and two-stack sortable permutations}
\author{Wenjie Fang\thanks{\href{mailto:fang@math.tugraz.at}{fang@math.tugraz.at}. Wenjie Fang is currently supported by Austria FWF Grant I2309-N35 and P27290. Part of the present work was done during a postdoc at ENS de Lyon and INRIA in France.} \\ {\small Institute of Discrete Mathematics, Technical University of Graz, Austria}}
\begin{document}

\maketitle

\abstract{In 2017, Duchi, Guerrini, Rinaldi and Schaeffer proposed a new family of combinatorial objects called ``fighting fish'', which are counted by the same formula as more classical objects, such as two-stack sortable permutations and non-separable planar maps. In this article, we explore the bijective aspect of fighting fish by establishing a bijection to two-stack sortable permutations, using a new recursive decomposition of these permutations. With our bijection, we give combinatorial explanations of several results on fighting fish proved previously with generating functions. Using the decomposition of two-stack sortable permutations, we also prove the algebraicity of their generating function, extending a result of Bousquet-Mélou (1998).}

\horiz

\section{Introduction}

In \cite{fighting-fish}, Duchi, Guerrini, Rinaldi and Schaeffer introduced a new class of combinatorial objects called \emph{fighting fish}, which can be seen as a generalization of directed convex polyominoes. They found that the number of fighting fish with $n+1$ lower free edges is given by
\begin{equation} \label{eq:enum}
\frac{2}{(n+1)(3n+1)} \binom{3n+1}{n}.
\end{equation}
This formula also counts various other objects, such as two-stack sortable permutations \cite{west-tsp-def,zeilberger}, non-separable planar maps \cite{tutte-census,brown-nsp}, left ternary trees \cite{left-ternary-tree,nsp-bij-census} and generalized Tamari intervals \cite{tam-def,tam-bij}. In \cite{fighting-fish-enum}, the same authors also proved some refined equi-enumeration results on fighting fish and left ternary trees. However, their proofs used generating functions, thus combinatorially unsatisfactory. The authors then conjectured a still more refined enumerative correspondence between fighting fish and left ternary trees, involving more statistics. They also called for a bijective proof of their conjecture, which is still open to the author's knowledge.

Indeed, unlike the previously mentioned classes of objects, which are linked by a net of bijections, we still lack a combinatorial understanding of fighting fish. The present article is meant to fill this gap by providing a recursive bijection between fighting fish and two-stack sortable permutations. More precisely, our main result is the following (related definitions will be given later).

\begin{thm} \label{thm:main}
  There is a bijection $\phi$ from two-stack sortable permutations to fighting fish satisfying the following conditions. Given a two-stack sortable permutation $\pi$, let $\S(\pi)$ be the result of sorting $\pi$ once through a stack. Suppose that $\pi$ is of length $n$, with $i$ ascents and $j$ descents in $\pi$, and $k$ left-to-right maxima and $\ell$ elements $a$ that precedes $a-1$ in $\S(\pi)$. Then $\phi(\pi)$ is a fighting fish with $n+1$ lower free edges, of which $i+1$ are left and $j+1$ are right, and with fin-length $k+1$ and $\ell+1$ tails.
\end{thm}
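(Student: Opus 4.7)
The plan is to build the bijection by mirroring a recursive decomposition on each side. First, I would establish a recursive decomposition of two-stack sortable permutations. A natural place to cut is at the last left-to-right maximum of $\pi$, writing $\pi = \alpha \, m \, \beta$ where $m$ is the final left-to-right maximum, so that every element of $\beta$ is smaller than $m$. I would then analyze which pairs $(\alpha,\beta)$ produce a two-stack sortable $\pi$, and how $\S(\pi)$ is built from $\S(\alpha)$ and $\S(\beta)$. The goal is to show that ascents, descents, left-to-right maxima, and the count of $a$ preceding $a-1$ in $\S(\pi)$ each decompose additively (with small boundary contributions) across the cut.

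Second, I would import a parallel recursive decomposition of fighting fish. Fighting fish come equipped with natural growth or gluing procedures, and in particular admit a splitting along a tail that factors a fish into a distinguished piece and a smaller fish. I would match such a splitting with the cut $(\alpha, m, \beta)$ above, arranging that lower free edges split into $(i+1)+(j+1)$ according to ascents versus descents of $\pi$, that the fin-length gains exactly one unit per left-to-right maximum of $\pi$, and that the tail count gains exactly one unit per $a$ satisfying $a$ precedes $a-1$ in $\S(\pi)$.

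Third, I would define $\phi$ recursively by declaring that it commutes with the two decompositions, check the base case (the length-one permutation maps to the single-cell fish), and verify the five statistic equalities by induction on $n$. Bijectivity follows immediately since both decompositions are reversible, and the inverse map is read off from the same recursion.

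The main obstacle will be the last statistic, the count of $a$ preceding $a-1$ in $\S(\pi)$. This is a second-order invariant: it depends on $\S(\pi)$ rather than on $\pi$ directly, and $\S$ is nonlocal, so a single element of $\beta$ can interact with $m$ or with elements of $\alpha$ through the stack. Ensuring that this count decomposes cleanly along the chosen cut, and matches the additive behavior of tails of a fighting fish, is the delicate combinatorial step. I would expect structural characterizations of two-stack sortable permutations (West, Bousquet-M\'elou) to be useful here, either as guides for choosing the right cut or as verification tools for the induction.
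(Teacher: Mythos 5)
Your cut point is the right one (the last left-to-right maximum of $\pi$ is its maximum element $n$, so your cut $\pi = \alpha\, m\, \beta$ is exactly the paper's $\pi = \pi_\ell \cdot n \cdot \pi_r$), and the overall strategy --- mirror a recursive decomposition of 2SSPs against the wasp-waist decomposition of fighting fish and define $\phi$ to commute with both --- is the paper's. But there is a genuine gap at the heart of the plan: once you standardize $\alpha$ and $\beta$ into genuine smaller permutations $\pi_1,\pi_2$ (which you must do for the recursion to close), the map $\pi \mapsto (\pi_1,\pi_2)$ is no longer injective, so ``bijectivity follows immediately since both decompositions are reversible'' does not hold as stated. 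The paper shows that a given pair $(\pi_1,\pi_2)$ arises from exactly $1+\slmax(\pi_2)$ two-stack sortable permutations, where $\slmax(\pi_2)$ is the number of left-to-right maxima of $\S(\pi_2)$: one recombination $C_1$ keeps all values of $\pi_1$ below those of $\pi_2$, and $\slmax(\pi_2)$ further recombinations $C_2(\pi_1,\pi_2,i)$ lift the maximum of $\pi_1$ to sit just above the $i$-th left-to-right maximum of $\S(\pi_2)$. Identifying this catalytic statistic, proving that these recombinations are valid and exhaustive (Propositions~\ref{prop:c2-valid} and~\ref{prop:inverse-2}, which require a $231$-avoidance argument on $\S(\pi)$), and matching the multiplicity $1+\slmax(\pi_2)$ against the $1+(\fin(P_2)-1)$ choices available in the wasp-waist decomposition is the real content of the proof; your plan does not contain this step, and without it neither the bijectivity nor the fin-length correspondence can be established.

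Two smaller corrections. The fin-length corresponds to left-to-right maxima of $\S(\pi)$, not of $\pi$ (to $\slmax$, not $\lmax$), and it is not additive with a fixed boundary term: $\slmax(C_2(\pi_1,\pi_2,i)) = \slmax(\pi_1)+\slmax(\pi_2)-i+1$, matching $\fin(C_2^\bullet(P_1,P_2,i)) = \fin(P_1)+\fin(P_2)-i$, so the correction term depends on the recombination parameter $i$. Conversely, the statistic you single out as delicate --- elements $a$ preceding $a-1$ in $\S(\pi)$ --- turns out to be the easy one: once the constructions are in place it is additive with an increment of $0$ for $C_1$ and $1$ for $C_2$, exactly mirroring $\tails(C_1^\bullet(P_1,P_2)) = \tails(P_1)+\tails(P_2)-1$ and $\tails(C_2^\bullet(P_1,P_2,i)) = \tails(P_1)+\tails(P_2)$.
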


This result echoes the conjecture at the end of \cite{fighting-fish-enum}, which calls for a bijection from fighting fish to other objects such as left ternary trees. To prove our result, we first give a new recursive decomposition of two-stack sortable permutations. Then we observe that this new decomposition is isomorphic to a decomposition of fighting fish given in \cite{fighting-fish-enum}, which gives the recursive bijection $\phi$. We finally observe that various statistics are also carried over by $\phi$. Our result can thus be regarded as an equi-enumeration result refined by all related statistics, which can be understood combinatorially. When restricted to a subset of related statistics, we get a combinatorial vision of the refined enumeration results in \cite{fighting-fish-enum}. As a side product, we also prove the algebraicity of a refined generating function of two-stack sortable permutations similar to that in \cite{multistat}, using a simpler functional equation due to the new decomposition.

By providing a bijection between the newly introduced fighting fish and the relatively well-known two-stack sortable permutations, we in fact capture fighting fish in the net of bijections between objects counted by \eqref{eq:enum} we mentioned above. As a result, we could go further in the study of not only fighting fish but also other equi-enumerated objects, such as non-separable planar maps, by looking at structures transferred by our bijection, and natural compositions of our bijection with existing ones.

\section{Preliminaries and previous work}

Given two sequences $A$ and $B$, we denote by $A \cdot B$ their concatenation. The empty sequence (thus also the empty permutation) is denoted by $\epsilon$. We denote by $\len(A)$ the length of a sequence. We now adapt the setting in \cite{multistat}. Let $A = (a_1, a_2, \ldots, a_\ell)$ be a non-empty sequence of distinct integers, with $n$ its largest element. We can write $A$ as $A_L \cdot (n) \cdot A_R$, with $A_L$ (resp. $A_R$) the part of $A$ before (resp. after) $n$. We now define the \tdef{stack-sorting operator}, denoted by $\S$, recursively as
\begin{equation} \label{eq:S-def}
  \S(\epsilon) = \epsilon, \quad \S(A) = \S(A_L) \cdot \S(A_R) \cdot (n).
\end{equation}
For example, $\S(0, -1, 7, 9, 3) = -1,0,7,3,9$ and $\S(6,4,3,2,7,1,5) = 2,3,4,6,1,5,7$.

An equivalent way of thinking about $S$ is that it corresponds to a pass over a ``lazy stack'' $LS$, as described in \cite{west-tsp-def,multistat}. To get $\S(A)$, we start with $LS$ empty, and we push elements of $A$ one by one to $LS$, while maintaining an increasing order of elements in $LS$ from top to bottom. To this end, each time before we push an element $a_i$ into $LS$, we pop every element larger than $a_i$ in $LS$. After all elements are treated, we pop out every element in $LS$, and the overall output sequence is $\S(A)$. 

Given a permutation $\sigma$ in the symmetric group $\symg_n$ viewed as a sequence, we say that $\sigma$ is \tdef{stack-sortable} if $\S(\sigma)$ is the identity $\id_n$ of $\symg_n$. In \cite{taocp}, the following well-known result, expressed using pattern avoidance, was proved by Knuth.
\begin{prop} \label{prop:av231}
  A permutation $\sigma$ is stack-sortable if and only if it avoids the pattern $231$, that is, there are no indices $i<j<k$ such that $\sigma(k) < \sigma(i) < \sigma(j)$.
\end{prop}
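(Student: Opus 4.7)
The plan is to prove both implications simultaneously by strong induction on $n = \len(\sigma)$, exploiting the recursive identity \eqref{eq:S-def} together with the decomposition $\sigma = A_L \cdot (n) \cdot A_R$ around the largest letter. The base cases $n \le 1$ are immediate.

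The key step in both directions is a block-separation observation: under either hypothesis, every value in $A_L$ is strictly smaller than every value in $A_R$, so as sets $A_L = \{1,\ldots,m\}$ and $A_R = \{m+1,\ldots,n-1\}$ with $m = \len(A_L)$. On the $231$-avoiding side, an element $a \in A_L$ together with some $b \in A_R$ satisfying $b < a$ would form a $231$ pattern with $n$ as the middle letter. On the stack-sortable side, the equation $\S(A_L)\cdot\S(A_R)\cdot(n) = \id_n$ forces both this set decomposition and the stronger facts $\S(A_L) = (1,\ldots,m)$ and $\S(A_R) = (m+1,\ldots,n-1)$, so each of $A_L$ and $A_R$ is itself stack-sortable on its own element set. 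Since $231$-avoidance is trivially preserved under taking subsequences, in both directions the inductive hypothesis applies to $A_L$ and $A_R$ (after the obvious order-preserving relabelling of $A_R$).

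Once this is in place, the induction closes. For $(\Leftarrow)$, the recursion gives $\S(\sigma) = \S(A_L)\cdot\S(A_R)\cdot(n) = (1,\ldots,m) \cdot (m+1,\ldots,n-1) \cdot (n) = \id_n$. For $(\Rightarrow)$, a putative $231$ pattern at positions $i<j<k$ in $\sigma$ is examined case by case according to how $i, j, k$ distribute among $A_L$, the singleton position of $n$, and $A_R$: patterns lying entirely inside a single block contradict the inductive hypothesis, and any pattern that crosses the middle — including the case where $n$ itself plays the role of the middle letter, and the cases where only some of $i, j, k$ lie in $A_R$ — is ruled out by the value inequality $A_L < A_R$ established above.

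The main obstacle is organising the case analysis in $(\Rightarrow)$ into an exhaustive but uncluttered enumeration; each individual case is a one-line value comparison, but one must be careful to cover every way the indices $i,j,k$ can straddle the position of $n$. Beyond this, the only other subtlety is the minor technicality that stack-sortability of $A_R$ requires an order-preserving shift of values (since $A_R \subset \{m+1,\ldots,n-1\}$ rather than $\{1,\ldots,n-1-m\}$), which is harmless because both $\S$ and the property of $231$-avoidance commute with such shifts.
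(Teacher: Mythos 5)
Your proposal is correct, but note that the paper offers no proof of this proposition at all: it is Knuth's classical result, stated with a citation to \emph{The Art of Computer Programming}, so there is no in-paper argument to compare against. Your inductive argument via the decomposition $\sigma = A_L \cdot (n) \cdot A_R$ is the standard proof of this fact, and all the key steps check out: a $231$ pattern with $n$ as the middle letter is exactly what forbids an element of $A_L$ from exceeding an element of $A_R$; conversely $\S(A_L)\cdot\S(A_R)\cdot(n) = \id_n$ forces $\S(A_L) = (1,\ldots,m)$ and $\S(A_R) = (m+1,\ldots,n-1)$ because $\S$ merely permutes its input; and the case analysis for $(\Rightarrow)$ is exhaustive since any cross-block triple $i<j<k$ either uses $n$ (whose value rules out the roles of ``$2$'' in position $i$ and ``$1$'' in position $k$, and whose maximality rules out a smaller later element when it sits in position $j$) or places $i$ in $A_L$ and $k$ in $A_R$, contradicting $A_L < A_R$. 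The relabelling subtlety for $A_R$ is indeed harmless, exactly as the paper's own Proposition~\ref{prop:S-commute} records for the analogous shifts used later.
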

We say that $\sigma \in \symg_n$ is a \tdef{two-stack sortable permutation} (or \tdef{2SSP}) if $\S(\S(\sigma)) = \id_n$. We denote by $\mathcal{T}_n$ the set of 2SSPs of length $n$, and $\mathcal{T} = \cup_{n \geq 1} \mathcal{T}_n$ the set of all 2SSPs. We take the convention that the empty permutation $\epsilon$ is not a 2SSP for later compatibility with fighting fish. A characterization of 2SSPs with pattern avoidance can be found \cite{west-tsp-def}.

It was first conjectured by West \cite{west-tsp-def} that the number of 2SSPs of length $n$ is given by \eqref{eq:enum}. Zeilberger provided a proof in \cite{zeilberger} using generating functions. A refined enumeration including various statistics was given by Bousquet-M\'elou in \cite{multistat}. West also observed that \eqref{eq:enum} also counts the number of non-separable planar maps with $n+1$ edges studied by Tutte and Brown \cite{tutte-census,brown-nsp}. A combinatorial proof of West's observation was first given by Dulucq, Gire and Guibert in \cite{eight-bij-refined}, using a sequence of 8 bijections from 2SSPs to a certain family of permutations encoding non-separable planar maps. Then Goulden and West found in \cite{goulden-west} a recursive bijection directly between 2SSPs and non-separable planar maps. They showed that, under specific recursive decompositions, the two classes of objects share the same set of decomposition trees, later called \emph{description trees} in \cite{nsp-bij-census}. Though nice, all these bijections give no direct proof of the enumeration formula. It was in \cite{nsp-bij-census} that Jacquard and Schaeffer finally gave a combinatorial proof of \eqref{eq:enum} by relating description trees to the so-called \emph{left ternary trees}, first studied in \cite{left-ternary-tree}. More recent advances on 2SSPs and related families of permutations defined by sorting through devices like stacks and queues can be found in \cite{bona-survey}.

We now turn to fighting fish defined and studied by Duchi, Guerrini, Rinaldi and Schaeffer in \cite{fighting-fish,fighting-fish-enum}, which can be seen as a generalization of directed convex polyominoes. In the construction, we use \emph{cells}, which are unit squares rotated by 45 degrees. An edge of a cell is \tdef{free} if it is adjacent to only one cell. A \tdef{fighting fish} is constructed by starting with an initial cell called the \emph{head}, then adding cells successively as illustrated on the left side of Figure~\ref{fig:ff}. More precisely, there are three ways to add a new cell (the gray one): (a) we attach it to a free upper right edge of a cell; (b) we attach it to a free lower right edge of a cell; (c) if there is a cell $a$ with two cells $b$ and $c$ attached to its upper right and lower right edge, and such that $b$ (resp. $c$) has a free lower right (resp. upper right) edge, then we attach the new cell to both $b$ and $c$.

\begin{figure}
  \begin{center}
    \includegraphics[scale=1,page=2]{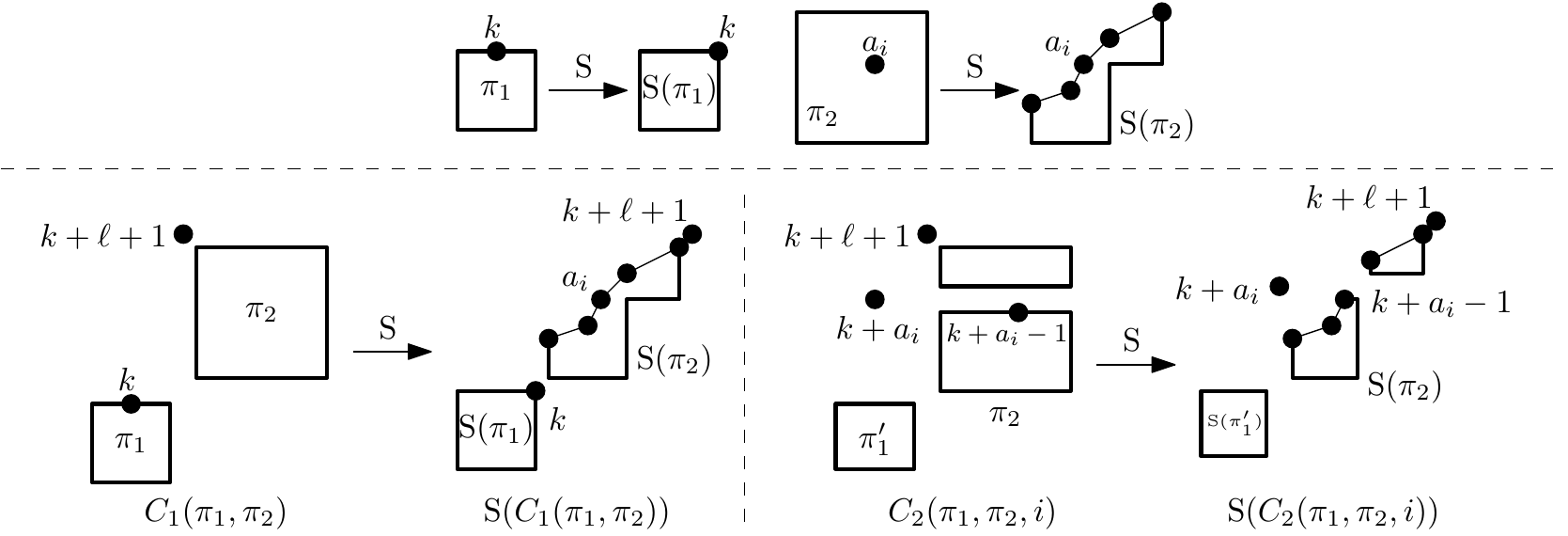}
  \end{center}
  \caption{Adding a cell to a fighting fish, and an example of a fighting fish}
  \label{fig:ff}
\end{figure}

We also need some statistics on fighting fish defined in \cite{fighting-fish-enum}. If a cell has both its right edges free, then its right vertex is called a \tdef{tail}. A fighting fish may have several tails, but it has only one \tdef{nose}, which is the left vertex of its head. The \tdef{fin} of a fighting fish is the path from the nose to the first tail met by following free edges counter-clockwise.

The enumerative properties of fighting fish are studied in \cite{fighting-fish-enum}. It turns out that fighting fish with $n+1$ lower free edges are also counted by \eqref{eq:enum}. Moreover, we have the following refinement.
\begin{prop}[Theorem~2 of \cite{fighting-fish-enum}] \label{prop:enum-refined}
  The number of fighting fish with $i+1$ left lower free edges and $j+1$ right lower free edges is
  \begin{equation}
    \frac{1}{(i+1)(j+1)}\binom{2i+j+1}{j}\binom{i+2j+1}{i}.
  \end{equation}
\end{prop}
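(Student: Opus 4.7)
The plan is to derive this proposition as a consequence of Theorem~\ref{thm:main} together with a refined enumeration of two-stack sortable permutations by ascents and descents. By Theorem~\ref{thm:main}, the bijection $\phi$ sends a 2SSP of length $n$ with $i$ ascents and $j$ descents (so $n = i+j+1$) to a fighting fish with exactly $i+1$ left and $j+1$ right lower free edges. Hence the proposition is equivalent to showing that the number of $\pi \in \mathcal{T}_{i+j+1}$ with $\asc(\pi) = i$ and $\des(\pi) = j$ equals $\frac{1}{(i+1)(j+1)}\binom{2i+j+1}{j}\binom{i+2j+1}{i}$, and it now suffices to enumerate 2SSPs refined by $(\asc,\des)$.

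To carry this out, I would introduce the bivariate generating function $T(x,y) = \sum_{\pi \in \mathcal{T}} x^{\asc(\pi)} y^{\des(\pi)}$ and derive a functional equation for it from the new recursive decomposition of 2SSPs that the paper announces (the ``side product'' mentioned in the introduction). Since this decomposition is advertised as simpler than the one used by Bousquet-M\'elou in \cite{multistat}, I expect the resulting equation to be polynomial in $T$ and in one catalytic variable $t$ tracking some natural parameter of the decomposition (e.g., the size of a distinguished sub-permutation created by the recursion), with coefficients in $\mathbb{Q}[x,y]$.

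I would then apply the kernel method, or the obstinate kernel method if the catalytic variable proves stubborn, to eliminate $t$ and produce an algebraic system for $T(x,y)$. The shape of the target formula — a product of two binomials, symmetric under the simultaneous swap $(i,x) \leftrightarrow (j,y)$ — strongly suggests a rational parametrization via two auxiliary power series $U = U(x,y)$ and $V = V(x,y)$ satisfying a system of the form $U = x\,P(U,V)$, $V = y\,P(V,U)$ for some explicit polynomial $P$, with $T$ itself a simple rational function of $U$ and $V$. Bivariate Lagrange inversion applied to this system would then produce the two binomial coefficients naturally, one from each Lagrange identity, while the prefactor $\tfrac{1}{(i+1)(j+1)}$ arises from the standard Jacobian.

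The main obstacle will be the combinatorial step of identifying the correct new decomposition of 2SSPs that tracks $(\asc,\des)$ cleanly, together with the algebraic step of guessing the right parametric form $(U,V)$. The decomposition in \cite{multistat} tracks additional statistics and yields a heavier kernel, so extracting a simpler version is delicate. Once the parametrization is correctly guessed, however, verifying its consistency with the kernel equation and performing the bivariate Lagrange inversion is routine, so the difficulty concentrates on the decomposition and on spotting the parametrization — both considerably constrained by the known shape of the formula.
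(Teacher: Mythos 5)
Your first step is exactly the paper's: Theorem~\ref{thm:main} transfers the pair (left lower free edges, right lower free edges) on fighting fish to the pair $(\asc+1,\des+1)$ on 2SSPs, so the proposition reduces to the refined count of 2SSPs by ascents and descents. Where you diverge is in how that count is obtained. The paper does not re-derive it: Proposition~\ref{prop:enum-refined} is imported as Theorem~2 of \cite{fighting-fish-enum}, and the combinatorial ``recovery'' offered after the main theorem simply composes $\phi$ with the Goulden--West bijection \cite{goulden-west} and invokes the known formula for non-separable planar maps counted by vertices and faces \cite{nsp-refined} (equivalently, the known count of 2SSPs by descents and ascents from \cite{goulden-west,nsp-bij-census}). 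You instead propose to re-prove that count from the new decomposition via a functional equation, the kernel/quadratic method, and bivariate Lagrange inversion on a symmetric parametrization. That route is feasible and self-contained in a way the paper's is not: the equation you want is essentially \eqref{eq:011} specialized at $u=v=s=1$ (one catalytic variable $x$ tracking $\slmax$, quadratic in the unknown), and the parametrization-plus-Lagrange-inversion endgame is the classical Brown--Tutte/Bousquet-M\'elou computation. Note also that the formula is genuinely symmetric under $i\leftrightarrow j$, so the descent/ascent versus ascent/descent discrepancy between your reduction and the cited statements is harmless.

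The caveat is that your second step is a plan rather than a proof: the explicit polynomial $P$ in the system $U=xP(U,V)$, $V=yP(V,U)$, the verification that $T$ is the claimed rational function of $U,V$ consistent with the kernel equation, and the Lagrange inversion extracting $\frac{1}{(i+1)(j+1)}\binom{2i+j+1}{j}\binom{i+2j+1}{i}$ are all deferred to a guess-and-check you do not carry out, and that is where the entire content of the enumeration lies. If you intend the proposition to rest on a citation of the known 2SSP count (as the paper does), say so and stop after the reduction; if you intend to re-derive it, the algebraic section must actually be executed, since the paper's own Proposition~\ref{prop:algebraic} only establishes algebraicity of the series from \eqref{eq:multi-stat}, not the closed form.
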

Again, this result was proved using generating functions. The same formula was already in \cite{nsp-refined} as the number of non-separable planar maps with $i$ vertices and $j$ faces, and also in \cite{goulden-west,nsp-bij-census} as the number of two-stack sortable permutations with $i$ descents and $j$ ascents. Later we will see a combinatorial explanation via our bijection.

\section{A decomposition of two-stack sortable permutations}

We first lay down some definitions. Given a sequence $A = (a_1, a_2, \ldots, a_n)$ of distinct integers, we define $P(A)$ as the standardization of $a$, that is, the permutation reflecting the order of elements in $a$. For instance, with $A = (0,4,1,9,5,6)$, we have $P(A) = (1,3,2,6,4,5)$. For a permutation $\sigma$, we denote by $\sigma^{+k}$ the sequence obtained by adding $k$ to each element of $\sigma$, and by $\sigma^{+(k_1, m, k_2)}$ with $k_1 < k_2$ the sequence obtained from $\sigma$ by adding $k_1$ to each element strictly smaller than $m$, and adding $k_2$ to other elements. For example, with $\sigma = (6,2,4,1,5,3)$, we have:
\[
\sigma^{+3} = (9,5,7,4,8,6), \quad \sigma^{+(1,3,3)} = (9,3,7,2,8,6).
\]
We observe that, for any permutation $\sigma$ and any values of $k$, $m$ and $k_1 < k_2$, we have $P(\sigma^{+k}) = P(\sigma^{k_1, m, k_2}) = \sigma$. The following statement about $\S$ commuting with these operations is immediate.

\begin{prop} \label{prop:S-commute}
  For any $\sigma \in \symg_n$, we have $\S(\sigma^{+k}) = \S(\sigma)^{+k}$ for any $k \in \mathbb{N}$, and we also have $\S(\sigma^{+(k_1, m, k_2)}) = \S(\sigma)^{+(k_1, m, k_2)}$ for any $0 \leq k_1 < k_2$ and $0 \leq m \leq n$.
\end{prop}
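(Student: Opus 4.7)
The plan is to reduce the proposition to a single general lemma: for any strictly increasing map $f : \mathbb{Z} \to \mathbb{Z}$ and any finite sequence $A$ of distinct integers, one has $\S(f(A)) = f(\S(A))$, where $f$ is applied entrywise. The proposition is then immediate by specializing $f$ in two ways. For the first statement take $f(a) = a + k$, which is obviously strictly increasing. For the second, take $f(a) = a + k_1$ if $a < m$ and $f(a) = a + k_2$ otherwise; with these choices $\sigma^{+k}$ and $\sigma^{+(k_1, m, k_2)}$ are exactly $f(\sigma)$ applied entrywise, so both equalities of the proposition are special cases of the lemma.

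Before proving the lemma, I need to verify that the piecewise $f$ above is indeed strictly increasing. This reduces to checking $a < b \Rightarrow f(a) < f(b)$ by cases on the position of $m$ relative to $\{a,b\}$. When $a$ and $b$ lie on the same side of $m$, the claim is immediate since $f$ acts as a single translation there; when $a < m \leq b$, the chain $a + k_1 < m + k_1 \leq m + k_2 \leq b + k_2$ gives $f(a) < f(b)$, using $k_1 < k_2$.

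For the lemma itself, I would induct on $\len(A)$, with the empty sequence as the base case. In the inductive step, write $A = A_L \cdot (n) \cdot A_R$ with $n = \max A$. Since $f$ is strictly increasing, $f(n) = \max f(A)$, so $f(A) = f(A_L) \cdot (f(n)) \cdot f(A_R)$ is the analogous decomposition of $f(A)$ around its maximum. Applying the recursive definition~\eqref{eq:S-def} of $\S$ to $\S(f(A))$ and then invoking the induction hypothesis on the shorter sequences $f(A_L)$ and $f(A_R)$ yields
\[
\S(f(A)) = f(\S(A_L)) \cdot f(\S(A_R)) \cdot (f(n)) = f\bigl(\S(A_L) \cdot \S(A_R) \cdot (n)\bigr) = f(\S(A)).
\]

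There is no genuine obstacle; the only subtlety, and the reason I phrase the lemma for arbitrary sequences of distinct integers rather than only for permutations in $\symg_n$, is that the subsequences $A_L, A_R$ in the recursive step need not have values forming an initial segment of $\mathbb{N}$, so one needs the statement in a form that applies to them directly. With this generality built in, the induction closes cleanly and the proposition drops out.
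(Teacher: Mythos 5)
Your proposal is correct and rests on the same idea as the paper's proof, namely that $\S$ depends only on the relative order of the entries, which both $\sigma^{+k}$ and $\sigma^{+(k_1,m,k_2)}$ preserve; the paper simply asserts this in one sentence, while you make it precise via the order-preserving-map lemma and an induction on the recursive definition~\eqref{eq:S-def}. Your version is a rigorous elaboration of the same observation, not a different route.
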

\begin{proof}
  We observe that the operation of $S$ only depends on the relative order of elements, which is the same in $\sigma^{+k}$ and $\sigma^{+(k_1, m, k_2)}$ as in $\sigma$.
\end{proof}

We now present a recursive decomposition of 2SSPs. Let $\pi \in \mathcal{T}_n$ be a 2SSP of size $n$. We suppose that $\pi = \pi_\ell \cdot n \cdot \pi_r$ with $\pi_\ell$ of length $k$. We define $\pi_1 = P(\pi_\ell)$ and $\pi_2 = P(\pi_r)$, and the decomposition is written as $D(\pi) = (\pi_1, \pi_2)$. Here, $\pi_1, \pi_2$ may be empty. The following proposition shows that $D$ is indeed a recursive decomposition.

\begin{prop} \label{prop:valid-decomp}
  For $\pi \in \mathcal{T}_n$ with $n \geq 1$ and $D(\pi) = (\pi_1, \pi_2)$, we have $\pi_1, \pi_2 \in \{\epsilon\} \cup \mathcal{T}$.
\end{prop}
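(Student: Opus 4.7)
The plan is to exploit the factorization of $\S(\pi)$ that comes directly from the recursive definition \eqref{eq:S-def}. Writing $\pi = \pi_\ell \cdot (n) \cdot \pi_r$, we immediately obtain $\S(\pi) = \alpha \cdot \beta \cdot (n)$, where $\alpha = \S(\pi_\ell)$ and $\beta = \S(\pi_r)$. Since $\pi \in \mathcal{T}_n$ means $\S(\S(\pi)) = \id_n$, the permutation $\S(\pi)$ is stack-sortable, and by Proposition~\ref{prop:av231} it is $231$-avoiding.

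Next I would observe that the trailing entry $n$ cannot appear in any occurrence of $231$ in $\S(\pi)$: the middle entry of a $231$-pattern must be the largest of the three, yet nothing follows $n$. Hence the prefix $\alpha \cdot \beta$ is $231$-avoiding, and so are each of the contiguous subsequences $\alpha$ and $\beta$ as sequences of distinct integers. Since $\S$ depends only on the relative order of its argument---the very observation underlying Proposition~\ref{prop:S-commute}---it commutes with the standardization map $P$, yielding $\S(\pi_1) = P(\S(\pi_\ell)) = P(\alpha)$ and similarly $\S(\pi_2) = P(\beta)$. Standardization preserves pattern occurrences, so $\S(\pi_1)$ and $\S(\pi_2)$ are $231$-avoiding permutations, hence stack-sortable by Proposition~\ref{prop:av231} applied a second time. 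This gives $\S(\S(\pi_i)) = \id$ whenever $\pi_i$ is non-empty, which is exactly $\pi_i \in \mathcal{T}$; the boundary cases $\pi_\ell = \epsilon$ or $\pi_r = \epsilon$ trivially produce $\pi_i = \epsilon$, covered by $\{\epsilon\} \cup \mathcal{T}$.

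I do not foresee any real obstacle in this plan. The only bookkeeping point requiring minor care is to check that the notions of ``$231$-avoidance'' and stack-sortability, stated in Proposition~\ref{prop:av231} for permutations, transfer cleanly to sequences of arbitrary distinct integers via standardization---but this is precisely the principle that $\S$ sees only relative order, already exploited in the proof of Proposition~\ref{prop:S-commute}.
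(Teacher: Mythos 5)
Your proposal is correct and follows essentially the same route as the paper: factor $\S(\pi) = \S(\pi_\ell)\cdot\S(\pi_r)\cdot(n)$, use stack-sortability of $\S(\pi)$ to get $231$-avoidance of the two blocks, and pass through standardization to conclude $\S(\pi_1),\S(\pi_2)$ are stack-sortable. You are merely more explicit than the paper about the commutation of $\S$ with standardization and about the harmlessness of the trailing $n$, which the paper leaves implicit.
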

\begin{proof}
  From Proposition~\ref{prop:av231}, we know that $\S(\pi) = \S(\pi_1) \cdot \S(\pi_2) \cdot n$ avoids the pattern $231$, which means that $\S(\pi_1)$ and $\S(\pi_2)$ also avoids $231$. We thus conclude that $\pi_1$ and $\pi_2$ are either empty or in $\mathcal{T}$.
\end{proof}

Now, given $\pi_1, \pi_2$, we exhibit some (in fact, all, \textit{cf.} Propsition~\ref{prop:inverse-2}) possibilities of $\pi \in \mathcal{T}$ such that $D(\pi)=(\pi_1, \pi_2)$, using a new statistic on 2SSPs. Given $\pi \in \mathcal{T}$, we denote by $\slmax(\pi)$ the number of left-to-right maxima in $\S(\pi)$, \textit{i.e.}, the number of indices $i$ such that for all $j<i$ we have $\S(\sigma)(i) > \S(\sigma)(j)$. For example, with $\pi = (3,1,2,5,7,6,4)$, we have $\S(\pi) = (1,2,3,5,4,6,7)$, giving $\slmax(\pi) = 6$. We define $\slmax(\epsilon)=0$. Now suppose that $\pi_1 \in \mathcal{T}_k$ and $\pi_2 \in \mathcal{T}_\ell$. Let $t = \slmax(\pi_2)$, and $a_1, a_2, \ldots, a_t$ be the $t$ left-to-right maxima of $\S(\pi_2)$. We can construct elements in $\mathcal{T}_{k+\ell+1}$ in the following ways:
\begin{itemize}
\item $C_1(\pi_1, \pi_2) = \pi_1 \cdot (k+\ell+1) \cdot \pi_2^{+k}$;
\item $C_2(\pi_1, \pi_2, i) = \pi_1^{+(0,k,a_i)} \cdot (k + \ell + 1) \cdot \pi_2^{+(k-1,a_i+1,k)}$ for $1 \leq i \leq t$.
\end{itemize}
Both constructions are illustrated in Figure~\ref{fig:construction}. In $C_1(\pi_1, \pi_2)$, we allow $\pi_1$ and/or $\pi_2$ to be empty. In $C_2(\pi_1, \pi_2)$, both $\pi_1$ and $\pi_2$ must be non-empty. We now prove that our constructions are valid.

\begin{figure}
  \begin{center}
    \includegraphics[page=1, width=\textwidth]{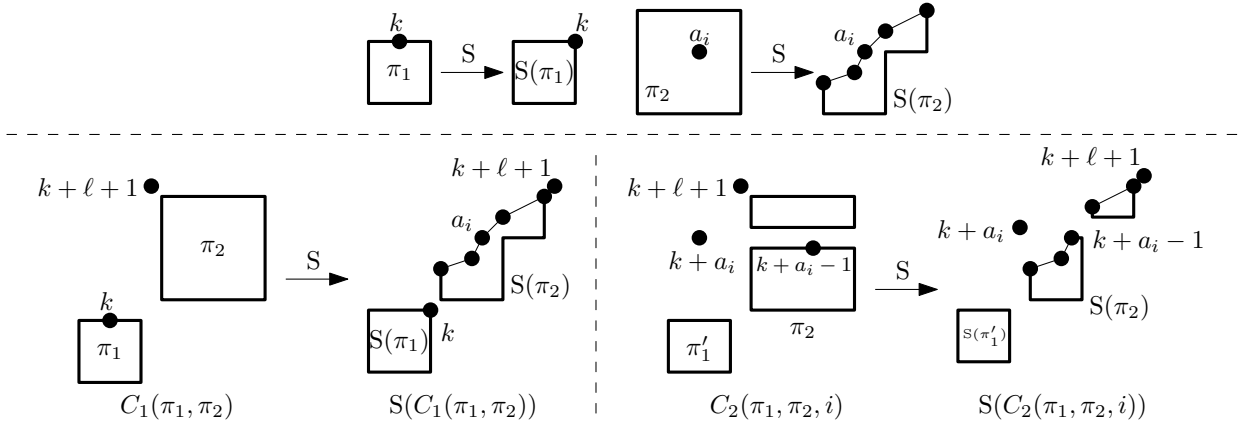}
  \end{center}
  \caption{Constructions $C_1$ and $C_2$}
  \label{fig:construction}
\end{figure}

\begin{prop} \label{prop:c1-valid}
  Given $k, \ell \geq 0$, for any $\pi_1 \in \mathcal{T}_k$ and $\pi_2 \in \mathcal{T}_\ell$, let $\pi = C_1(\pi_1, \pi_2)$. Here we take the convention that $\mathcal{T}_0 = \{ \epsilon \}$. We have $\pi \in \mathcal{T}_{k+\ell+1}$. Furthermore, $\slmax(\pi) = \slmax(\pi_1) + \slmax(\pi_2) + 1$.
\end{prop}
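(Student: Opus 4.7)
The plan is to compute $\S(\S(\pi))$ directly from the recursive definition~\eqref{eq:S-def} and verify that it equals $\id_{k+\ell+1}$. Because $k+\ell+1$ is the maximum of $\pi$ and lies between the blocks $\pi_1$ and $\pi_2^{+k}$, \eqref{eq:S-def} together with Proposition~\ref{prop:S-commute} immediately gives
\[
\S(\pi) = \S(\pi_1) \cdot \S(\pi_2)^{+k} \cdot (k+\ell+1).
\]
Writing $\tau_1 = \S(\pi_1)$ and $\tau_2 = \S(\pi_2)$, it remains to understand $\S$ of $\tau_1 \cdot \tau_2^{+k}$ and then of the whole thing.

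The key technical ingredient I would isolate as a small lemma is the following: if a sequence $A$ has all entries $\leq m$ and a sequence $B$ has all entries $> m$, then $\S(A \cdot B) = \S(A) \cdot \S(B)$. This follows by a short induction on $\len(B)$: when $B$ is non-empty its entries dominate those of $A$, so the maximum of $A \cdot B$ lies in $B$, and splitting at this maximum via \eqref{eq:S-def} reduces to a strictly smaller instance of the same claim (with the left-hand side becoming $A \cdot B_L$, which still satisfies the value-separation hypothesis). Applying this lemma to $\tau_1$ (values in $\{1,\ldots,k\}$) and $\tau_2^{+k}$ (values in $\{k+1,\ldots,k+\ell\}$), and then using \eqref{eq:S-def} once more with the trailing maximum $k+\ell+1$, we obtain
\[
\S(\S(\pi)) = \S(\tau_1) \cdot \S(\tau_2)^{+k} \cdot (k+\ell+1).
\]
By hypothesis $\S(\tau_1) = \S(\S(\pi_1)) = \id_k$ and $\S(\tau_2) = \id_\ell$ (with the convention $\S(\S(\epsilon)) = \epsilon$), so the right-hand side collapses to $\id_{k+\ell+1}$, establishing $\pi \in \mathcal{T}_{k+\ell+1}$.

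For the statistic, I would count left-to-right maxima directly in $\S(\pi) = \tau_1 \cdot \tau_2^{+k} \cdot (k+\ell+1)$. Because every entry of $\tau_2^{+k}$ strictly exceeds every entry of $\tau_1$, the left-to-right maxima of $\S(\pi)$ split cleanly into: those internal to $\tau_1$ (there are $\slmax(\pi_1)$ of them), those internal to $\tau_2^{+k}$ (there are $\slmax(\pi_2)$ of them, each of which automatically remains a left-to-right maximum in the full sequence since it already dominates every earlier entry of $\tau_1$), and finally $k+\ell+1$ itself as the global maximum. Summing gives $\slmax(\pi) = \slmax(\pi_1) + \slmax(\pi_2) + 1$. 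The only real obstacle here is the concatenation lemma; once it is in hand, everything reduces to careful bookkeeping on the recursive definition of $\S$, with the minor extra care needed to handle the corner cases $\pi_1 = \epsilon$ or $\pi_2 = \epsilon$ uniformly via the convention $\mathcal{T}_0 = \{\epsilon\}$.
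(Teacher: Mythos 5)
Your proof is correct and follows essentially the same route as the paper: compute $\S(\pi) = \S(\pi_1)\cdot\S(\pi_2)^{+k}\cdot(k+\ell+1)$ via Proposition~\ref{prop:S-commute} and the concatenation identity $\S(A\cdot B)=\S(A)\cdot\S(B)$ for value-separated blocks, then apply $\S$ again to get $\id_{k+\ell+1}$. You merely supply two details the paper leaves implicit --- the short induction proving the concatenation identity and the explicit count of left-to-right maxima for the $\slmax$ formula --- both of which are sound.
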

\begin{proof}
  We first observe that $\pi \in \symg_{k+\ell+1}$, since $\pi_1$ covers integers from $1$ to $k$, and $\pi_2^{+k}$ covers integers from $k+1$ to $k+\ell$. With Proposition~\ref{prop:S-commute}, and the fact that $\S(A \cdot B) = \S(A) \cdot \S(B)$ if every element of $A$ is smaller than all elements in $B$, we have
  \begin{align*}
    \S(\pi) &= \S(\pi_1) \cdot \S(\pi_2)^{+k} \cdot (k+\ell+1) \\
    \S(\S(\pi)) &= \S(\S(\pi_1)) \cdot \S(\S(\pi_2))^{+k} \cdot (k+\ell+1) = \id_{k+\ell+1}.
  \end{align*}
In the proof above, since we never specify any element in $\pi_1$ and $\pi_2$, the reasoning also works for $\pi_1$ and/or $\pi_2$ empty.
\end{proof}

\begin{prop} \label{prop:c2-valid}
  Given $k, \ell > 0$, $\pi_1 \in \mathcal{T}_k$ and $\pi_2 \in \mathcal{T}_\ell$, let $t = \slmax(\pi_2)$, and $i$ be an integer between $1$ and $t$. Suppose that $a_i$ is the $i^{\rm th}$ left-to-right maximum of $\S(\pi_2)$. Then we have $\pi = C_2(\pi_1, \pi_2, i) \in \mathcal{T}_{k+\ell+1}$. Furthermore, $\slmax(\pi) = \slmax(\pi_1) +\slmax(\pi_2) - i + 1$.
\end{prop}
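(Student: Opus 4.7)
The plan is to unpack $\S(\pi)$ and then $\S(\S(\pi))$, and to count the left-to-right maxima of $\S(\pi)$ separately. Since $\pi = \pi_1^{+(0,k,a_i)} \cdot (k+\ell+1) \cdot \pi_2^{+(k-1,a_i+1,k)}$ has its maximum $k+\ell+1$ in the middle, the recursive definition of $\S$ together with Proposition~\ref{prop:S-commute} gives
\[
  \S(\pi) = \sigma_1^{+(0,k,a_i)} \cdot \sigma_2^{+(k-1,a_i+1,k)} \cdot (k+\ell+1) =: A \cdot B \cdot (k+\ell+1),
\]
where $\sigma_j = \S(\pi_j)$, with the edge case $a_i = \ell$ degenerating $\sigma_2^{+(k-1,\ell+1,k)}$ harmlessly into $\sigma_2^{+(k-1)}$. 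Since $\sigma_1$ ends with its maximum $k$, the sequence $A$ ends with $k+a_i$ and otherwise takes values in $\{1,\ldots,k-1\}$; $B$ takes values in $\{k,\ldots,k+\ell\}\setminus\{k+a_i\}$.

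Applying $\S$ once more, because $k+\ell+1$ sits at the very end of $\S(\pi)$ we get $\S(\S(\pi)) = \S(A\cdot B) \cdot (k+\ell+1)$, so by Proposition~\ref{prop:av231} it suffices to show that $A\cdot B$ avoids the pattern $231$; this forces $\S(A\cdot B)=\id_{k+\ell}$ and hence $\pi\in \mathcal{T}_{k+\ell+1}$. A case analysis on how the indices of a hypothetical $231$-pattern distribute between $A$ and $B$ eliminates almost everything. Patterns wholly in $A$ are impossible because $\sigma_1$ avoids $231$ and only its last (and largest) entry is replaced by the still larger value $k+a_i$. Patterns wholly in $B$ are impossible because the shift $+(k-1,a_i+1,k)$ only widens the gap between values $\le a_i$ and values $>a_i$, so $B$ is order-isomorphic to $\sigma_2$, which also avoids $231$. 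A straddling pattern at indices $p<q<r$ requires $A(p)>B(r)$ with $B(r)\ge k$, which forces $A(p)=k+a_i$; hence $p$ is the last position of $A$ and $q,r$ both lie in $B$. Translating back, we obtain positions $q'<r'$ in $\sigma_2$ with $\sigma_2(q')>a_i$ and $\sigma_2(r')<a_i$.

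The crux of the argument is this last case, and I expect it to be the main obstacle: one must combine the left-to-right maximum property of $a_i$ with the $231$-avoidance of $\sigma_2$. Specifically, because $a_i$ is a left-to-right maximum of $\sigma_2$, any position with value $>a_i$ must lie strictly to the right of the position $p_i$ of $a_i$, so $p_i<q'<r'$ and $(a_i,\sigma_2(q'),\sigma_2(r'))$ is a $231$-pattern in $\sigma_2$, contradicting $\sigma_2\in\S(\mathcal{T})$ being $231$-avoiding. This is what motivates the seemingly unusual shift parameters $(k-1,a_i+1,k)$ in $C_2$: they are precisely calibrated to make the value inequalities line up with the index inequalities forced by the left-to-right maximum condition.

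Finally, for the $\slmax$ count I enumerate left-to-right maxima of $\S(\pi)=A\cdot B\cdot(k+\ell+1)$. Those inside $A$ occupy exactly the positions where $\sigma_1$ has left-to-right maxima, since raising only the final (already-maximal) entry of $\sigma_1$ preserves all such positions; this contributes $\slmax(\pi_1)$. A position in $B$ is a left-to-right maximum of $\S(\pi)$ iff its value exceeds $\max(A)=k+a_i$ and dominates all earlier $B$-entries with value $>k+a_i$; translating back to $\sigma_2$, these are the left-to-right maxima of $\sigma_2$ with value $>a_i$, namely $a_{i+1},\ldots,a_t$, contributing $t-i=\slmax(\pi_2)-i$. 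The terminal entry $k+\ell+1$ adds one more, yielding $\slmax(\pi)=\slmax(\pi_1)+\slmax(\pi_2)-i+1$.
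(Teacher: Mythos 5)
Your proof is correct and follows essentially the same route as the paper: compute $\S(\pi)$ via Proposition~\ref{prop:S-commute}, reduce membership in $\mathcal{T}$ to $231$-avoidance of the concatenation preceding the final maximum, rule out a straddling pattern by noting that the only large value in the first block is $k+a_i$ and then using the left-to-right-maximum property of $a_i$ to manufacture a forbidden $231$ inside $\S(\pi_2)$, and finally count left-to-right maxima blockwise. Your treatment of the straddling case and of the $\slmax$ count is in fact slightly more detailed than the paper's.
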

\begin{proof}
  We first check that $\pi = \pi_1^{+(0,k,a_i)} \cdot (k+\ell+1) \cdot \pi_2^{k-1, a_i+1, k}$ is in $\symg_{k+\ell+1}$. We see that the set of elements in $\pi_1^{+(0,k,a_i)}$ is $\{ j \mid 1 \leq j \leq k-1 \} \cup \{k+a_i\}$, and that of $\pi_2^{k-1, a_i+1, k}$ is $\{ j \mid k \leq j \leq k+\ell, j \neq k+a_i\}$. We thus know that $\pi$ is indeed in $\symg_{k+\ell+1}$.

  We now check that $\pi$ is in $\mathcal{T}_{k+\ell+1}$. With Proposition~\ref{prop:S-commute}, we have
  \[
  \S(\pi) = \S(\pi_1)^{+(0,k,a_i)} \cdot \S(\pi_2)^{+(k-1,a_i+1,k)} \cdot (k+\ell+1).
  \]
  Now we prove that $\tau = \S(\pi_1)^{+(0,k,a_i)} \cdot \S(\pi_2)^{+(k-1,a_i+1,k)}$ avoids the pattern $231$. Since $\pi_1, \pi_2 \in \mathcal{T}$, both $\S(\pi_1)$ and $\S(\pi_2)$ are stack-sortable, thus avoid $231$, and we only need to prove that there is no pattern $231$ across both parts. By construction, the first part $\S(\pi_1)^{+(0,k,a_i)}$ only has one element $k+a_i$ that is larger than some element in the second part $\S(\pi_2)^{+(k-1,a_i+1,k)}$. Therefore, we only need to check for three elements $b_3 < b_1 < b_2$ with $b_1$ in $\S(\pi_1)^{+(0,k,a_i)}$ and $b_2$ followed by $b_3$ in $\S(\pi_2)^{+(k-1, a_i+1, k)}$. By construction, we must have $b_1 = k+a_i$. But now, since $a_i$ is a left-to-right maximum of $\S(\pi_2)$, the element $b_2$ (thus $b_3$) must occur after $k-1+a_i$ in $\S(\pi_2)^{+(k-1,  a_i+1, k)}$. If such elements $b_2, b_3$ exist, then $k-1+a_i, b_2, b_3$ is a pattern $231$ in $\S(\pi_2)^{+(k-1, a_i+1, k)}$, which is impossible. Therefore, $\tau$ avoids $231$, hence $\S(\pi)$ also, which means $\pi \in \mathcal{T}_{k+\ell+1}$.

  For the equality on $\slmax$, we observe that $\S(\pi_1)^{+(0,k,a_i)}$ contains $k+a_i$, which is larger than the first $i$ left-to-right maxima ($k - 1 + a_j$ for $j \leq i$) in $\S(\pi_2)^{+(k-1, a_i+1, k)}$.
\end{proof}

We now show that the constructions $C_1, C_2$ are the inverse of the decomposition $D$.

\begin{prop} \label{prop:inverse-1}
Given two permutations $\pi_1, \pi_2$ in $\mathcal{T}$, we have $D(C_1(\pi_1, \pi_2)) = (\pi_1, \pi_2)$, and $D(C_2(\pi_1, \pi_2, i)) = (\pi_1, \pi_2)$ for any $1 \leq i \leq \slmax(\pi_2)$.
\end{prop}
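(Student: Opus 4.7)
The plan is a direct verification that unwinds the definitions, using the invariance property of $P$ under the shift operations already noted in the paragraph before Proposition~\ref{prop:S-commute}, namely that $P(\sigma^{+k}) = P(\sigma^{+(k_1,m,k_2)}) = \sigma$ whenever $k_1 < k_2$. Recall that $D$ is defined by splitting a permutation at its maximum entry and standardizing the two sides; so to compute $D(C_1(\pi_1,\pi_2))$ and $D(C_2(\pi_1,\pi_2,i))$ it suffices to (i) verify that the middle entry $k+\ell+1$ inserted by the construction is indeed the maximum of the resulting permutation, and (ii) check that standardizing the two flanking blocks recovers $\pi_1$ and $\pi_2$.

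For $C_1(\pi_1,\pi_2) = \pi_1 \cdot (k+\ell+1) \cdot \pi_2^{+k}$: the left block $\pi_1$ has entries in $\{1,\dots,k\}$ and the right block $\pi_2^{+k}$ has entries in $\{k+1,\dots,k+\ell\}$, so $k+\ell+1$ is strictly larger than all other entries and sits at the unique ``split'' position. The left block is already $\pi_1 \in \symg_k$, so $P(\pi_1) = \pi_1$; the right block standardizes back to $\pi_2$ by the invariance $P(\pi_2^{+k}) = \pi_2$. Hence $D(C_1(\pi_1,\pi_2)) = (\pi_1,\pi_2)$.

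For $C_2(\pi_1,\pi_2,i) = \pi_1^{+(0,k,a_i)} \cdot (k+\ell+1) \cdot \pi_2^{+(k-1,a_i+1,k)}$: the computation carried out in the proof of Proposition~\ref{prop:c2-valid} already shows that the left block occupies the value set $\{1,\dots,k-1\} \cup \{k+a_i\}$ and the right block occupies $\{k,\dots,k+\ell\} \setminus \{k+a_i\}$. Since $a_i \le \ell$, every such value is at most $k+\ell$, so $k+\ell+1$ is again the unique maximum and lies at the split position. Both shift operations used, $\sigma \mapsto \sigma^{+(0,k,a_i)}$ on $\pi_1$ and $\sigma \mapsto \sigma^{+(k-1,a_i+1,k)}$ on $\pi_2$, are of the form $\sigma^{+(k_1,m,k_2)}$ with $k_1 < k_2$, so by the invariance property of $P$ the two blocks standardize back to $\pi_1$ and $\pi_2$ respectively, giving $D(C_2(\pi_1,\pi_2,i)) = (\pi_1,\pi_2)$.

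There is really no obstacle here; the statement is essentially a bookkeeping check, and the only point that requires a moment's care is confirming that the inserted $k+\ell+1$ is the global maximum in the $C_2$ case, which is immediate once we observe that $a_i$ ranges over values in $\{1,\dots,\ell\}$ and hence $k+a_i \le k+\ell$. Everything else follows directly from the definition of $D$ and from the order-preserving nature of the shift maps already noted in the paper.
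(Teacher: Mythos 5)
Your proof is correct and takes essentially the same approach as the paper, which simply cites the invariance $P(\sigma^{+k}) = P(\sigma^{+(k_1,m,k_2)}) = \sigma$ and the definitions of $C_1$, $C_2$, $D$; you have just spelled out the bookkeeping (in particular that $k+\ell+1$ is the global maximum) that the paper leaves implicit.
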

\begin{proof}
  It is clear from the constructions of $C_1, C_2$ and $D$, with the fact that, for any permutation $\sigma$, we have $P(\sigma^{+k}) = P(\sigma^{+(k_1, m, k_2)}) = \sigma$.
\end{proof}

\begin{prop} \label{prop:inverse-2}
Let $\pi$ be a permutation in $\mathcal{T}$. Suppose that $D(\pi) = (\pi_1, \pi_2)$. Then either $\pi = C_1(\pi_1, \pi_2)$, or $\pi = C_2(\pi_1, \pi_2, i)$ for some $1 \leq i \leq \slmax(\pi_2)$.
\end{prop}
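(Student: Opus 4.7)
The plan is to analyze the set $V_L := \{v_1 < \cdots < v_k\} \subseteq \{1, \ldots, n-1\}$ of values appearing in $\pi_\ell$; this set, together with $(\pi_1, \pi_2)$, determines $\pi$ entirely (and $V_R := \{1, \ldots, n-1\} \setminus V_L$ carries $\pi_r$). It thus suffices to show that $V_L$ must equal $\{1, \ldots, k\}$ (yielding $\pi = C_1(\pi_1, \pi_2)$) or $\{1, \ldots, k-1\} \cup \{k + a_i\}$ for some left-to-right maximum value $a_i$ of $\S(\pi_2)$ (yielding $\pi = C_2(\pi_1, \pi_2, i)$).

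Using the recursion $\S(\pi) = \S(\pi_\ell) \cdot \S(\pi_r) \cdot (n)$ together with $\pi \in \mathcal{T}$, I would invoke Proposition~\ref{prop:av231} to say that $\S(\pi)$ avoids the pattern $231$, and decompose that $231$-avoidance into ``local'' and ``cross'' constraints. Since $\S$ depends only on relative order, $\S(\pi_\ell)$ is obtained from $\S(\pi_1)$ by the renaming $j \mapsto v_j$, and similarly $\S(\pi_r)$ from $\S(\pi_2)$; the patterns inside $\S(\pi_\ell)$ and $\S(\pi_r)$ are individually excluded by Proposition~\ref{prop:valid-decomp}, so only cross patterns remain to consider.

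For cross patterns with two entries in $\S(\pi_\ell)$ and one in $\S(\pi_r)$, the key observation is that $\S(\pi_1)$ always ends with its maximum $k$, so $\S(\pi_\ell)$ ends with $v_k$ and every other element of $V_L$ forms an ascending pair with $v_k$ inside $\S(\pi_\ell)$. Forbidding the $231$ pattern therefore requires $V_R \cap [1, v-1] = \emptyset$ for every $v \in V_L \setminus \{v_k\}$; the binding instance $v = v_{k-1}$, combined with $\{v_1, \ldots, v_{k-1}\} \subseteq [1, v_{k-1}]$, forces $v_i = i$ for $i < k$. Hence $V_L \supseteq \{1, \ldots, k-1\}$ and $v_k = k + a$ for some integer $a \geq 0$.

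For cross patterns with one entry in $\S(\pi_\ell)$ and two in $\S(\pi_r)$, the only nontrivial constraint comes from the entry $v_k = k+a$ (the entries $1, \ldots, k-1$ are smaller than every element of $V_R \subseteq [k, n-1]$ and so trivially admit no smaller companion in $V_R$). This forbids any value greater than $v_k$ in $\S(\pi_r)$ from preceding a value less than $v_k$; under the relabeling $V_R = \{k, \ldots, n-1\} \setminus \{k+a\}$, the $a$ smallest elements of $V_R$ correspond to the values $\{1, \ldots, a\}$ in $\S(\pi_2)$, so the condition becomes: the first $a$ positions of $\S(\pi_2)$ are exactly $\{1, \ldots, a\}$. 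A short $231$-avoidance argument on $\S(\pi_2)$ then shows this holds if and only if $a = 0$ or $a$ is the value of some left-to-right maximum of $\S(\pi_2)$ — the forward direction is immediate, and the converse uses $231$-avoidance at the LTR maximum's position to force all remaining smaller values to come immediately after. Taking $a = 0$ gives $C_1(\pi_1, \pi_2)$, while $a = a_i$ gives $C_2(\pi_1, \pi_2, i)$. The main obstacle is spotting the binding constraint in the first cross case: once one observes that $\S(\pi_\ell)$ ends at $v_k$, the inequality chains collapse cleanly to give $V_L \supseteq \{1, \ldots, k-1\}$, after which everything else follows by bookkeeping.
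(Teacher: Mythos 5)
Your proposal is correct and follows essentially the same route as the paper: both arguments reduce to applying $231$-avoidance of $\S(\pi)=\S(\pi_\ell)\cdot\S(\pi_r)\cdot(n)$ to the two kinds of cross patterns, using that $\S(\pi_\ell)$ ends with its maximum to force all but possibly one element of $\pi_\ell$ below all of $\pi_r$, and then locating the single exceptional value at a left-to-right maximum of $\S(\pi_2)$. The only difference is organizational — you track the value set $V_L$ directly, whereas the paper cases on the number of elements of $\pi_\ell$ exceeding $\min\pi_r$ (zero, one, or at least two) — but the underlying observations are identical.
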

\begin{proof}
  Let $n$ be the length of $\pi$. We have $\pi = \pi_\ell \cdot n \cdot \pi_r$, and $\S(\pi) = \S(\pi_\ell) \cdot \S(\pi_r) \cdot n$. We also have $\pi_1 = P(\pi_\ell)$ and $\pi_2 = P(\pi_r)$. We now consider elements in $\pi_\ell$ that are larger than the minimum of $\pi_r$. There may be zero, one or more such elements.

  Suppose that no element in $\pi_\ell$ is larger than the minimum of $\pi_r$. In this case, $\pi = C_1(\pi_1, \pi_2)$, and $\pi_1$ and $\pi_2$ can be empty.

  Now suppose that there is exactly one element $m$ in $\pi_\ell$ larger than the minimum of $\pi_r$. In this case, neither $\pi_\ell$ nor $\pi_r$ can be empty. It is clear that $m$ is the largest element in $\pi_\ell$. Let $R_-$ (resp. $R_+$) be the set of elements in $\pi_r$ that are smaller (resp. larger) than $m$. We know that $\S(\pi_\ell)$ ends in $m$, and we write $\S(\pi_\ell)$ as $\tau_1' \cdot m$. We now consider $\S(\pi)$ as
  \[
  \S(\pi) = \tau_1' \cdot m \cdot \S(\pi_r) \cdot n.
  \]
  Since $\S(\pi)$ is stack-sortable, it avoids the pattern $231$. But if an element $r_- \in R_-$ is preceded by an element $r_+ \in R_+$, then $m, r_+, r_-$ is a $231$ pattern. Therefore, we can write $\S(\pi_r) = \tau_r^- \tau_r^+$, where $\tau_r^-$ (resp. $\tau_r^+$) is composed of elements in $R_-$ (resp. $R_+$). The maximum element $m'$ in $\tau_r^-$ must be a left-to-right maximum of $\S(\pi_r)$. Suppose that $m'$ is the $i^{\rm th}$ left-to-right maximum of $\S(\pi_r)$. Since $\S(\pi)$ is a permutation, $m$ is strictly larger than all elements in $\tau_r^-$ and strictly smaller than those in $\tau_r^+$. Therefore, $\S(\pi_r)$ is of the form $\S(\pi_2)^{+(k-1,m'+1,k)}$, where $k$ is the length of $\pi_\ell$. Since $\pi_2 = P(\pi_r)$, we thus have $\pi_r = \pi_2^{+(k-1,m'+1,k)}$, which means $\pi = C_2(\pi_1, \pi_2, i)$.

  In the case where there are at least two elements $m_1, m_2$ in $\pi_\ell$ larger than the minimum $m_3$ of $\pi_r$, we can take $m_2$ the maximum of $\pi_\ell$, and we must have the order $m_1, m_2, m_3$ in $\S(\pi)$, which is an impossible $231$ pattern. We thus conclude the case analysis.
\end{proof}

From the propositions above, under the recursive decomposition $D$, we can build all 2SSPs in a unique way using $\epsilon$ and the constructions $C_1, C_2$. We now study statistics on 2SSPs under these constructions. We first define several statistics, some of which were also studied in \cite{multistat}. Let $\sigma$ be a permutation. We denote by $\lmax(\sigma)$ (resp. $\rmax(\sigma)$) the number of left-to-right (resp. right-to-left) maxima of $\sigma$, \textit{i.e.}, the number of indices $i$ such that for all $j<i$ (resp. $j>i$), we have $\sigma(i) > \sigma(j)$. We also denote by $\asc(\sigma)$ (resp. $\des(\sigma)$) the number of \tdef{ascents} (resp. \tdef{descents}) in $\sigma$, \textit{i.e.}, the number of indices $i$ such that $\sigma(i) < \sigma(i+1)$ (resp. $\sigma(i) > \sigma(i+1)$). Finally, we denote by $\srecl(\sigma)$ the number of \tdef{left descents} in $\S(\sigma)$, \textit{i.e.}, elements $a$ preceding $a-1$ in $\S(\sigma)$. We take the convention that $\lmax(\epsilon) = \rmax(\epsilon) = \asc(\epsilon) = \des(\epsilon) = \srecl(\epsilon) = 0$. We also recall that $\len(\sigma)$ is the length of $\sigma$ as a sequence. The following proposition follows directly from the constructions.

\begin{prop} \label{prop:tsp-stats}
  Given two non-empty permutations $\pi_1, \pi_2$, for any $i$ from $1$ to $\slmax(\pi_2)$, we have
  \begin{align*}
    \lmax(C_1(\pi_1, \pi_2)) = \lmax(C_2(\pi_1, \pi_2, i)) &= \lmax(\pi_1) + 1, \\
    \rmax(C_1(\pi_1, \pi_2)) = \rmax(C_2(\pi_1, \pi_2, i)) &= 1 + \rmax(\pi_2), \\
    \asc(C_1(\pi_1, \pi_2)) = \asc(C_2(\pi_1, \pi_2, i)) &= \asc(\pi_1) + 1 + \asc(\pi_2), \\
    \des(C_1(\pi_1, \pi_2)) = \des(C_2(\pi_1, \pi_2, i)) &= \des(\pi_1) + 1 + \des(\pi_2), \\
    \len(C_1(\pi_1, \pi_2)) = \len(C_2(\pi_1, \pi_2, i)) &= \len(\pi_1) + 1 + \len(\pi_2), \\
    \srecl(C_1(\pi_1, \pi_2)) &= \srecl(\pi_1) + \srecl(\pi_2), \\
    \srecl(C_2(\pi_1, \pi_2, i)) &= \srecl(\pi_1) + \srecl(\pi_2) + 1.
  \end{align*}
  Furthermore, when one of $\pi_1, \pi_2$ is empty, the formulas for $C_1(\pi_1, \pi_2)$ still hold, except that $\asc(C_1(\epsilon, \pi_2)) = \asc(\pi_2)$, and $\des(C_1(\pi_1, \epsilon)) = \des(\pi_1)$.
\end{prop}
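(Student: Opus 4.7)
The plan is to verify each of the seven identities directly from the explicit forms of $C_1$ and $C_2$, relying on two structural facts already at hand. The first is that both shifts $\sigma^{+k}$ and $\sigma^{+(k_1,m,k_2)}$ preserve the relative order of elements within $\pi_1$ and within $\pi_2$. The second is that, by the recursive definition \eqref{eq:S-def}, $\S(\pi_1)$ always ends with its maximum $k$ and $\S(\pi_2)$ always ends with its maximum $\ell$; I expect to use this repeatedly.

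The first five statistics ($\lmax$, $\rmax$, $\asc$, $\des$, $\len$) depend only on the relative order of elements within each block and on the location of the global maximum. In $C_1(\pi_1,\pi_2)=\pi_1\cdot(k+\ell+1)\cdot\pi_2^{+k}$, the central element $k+\ell+1$ is automatically a left-to-right maximum and a right-to-left maximum, while every element of $\pi_2^{+k}$ is dominated by the earlier $k+\ell+1$ and every element of $\pi_1$ by the later $k+\ell+1$, so the internal $\lmax$ and $\rmax$ contributions come solely from $\pi_1$ and $\pi_2$. The ascent from the last entry of $\pi_1$ to $k+\ell+1$ and the descent from $k+\ell+1$ to the first entry of $\pi_2^{+k}$ account for the ``$+1$''s in the $\asc$ and $\des$ formulas. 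The same bookkeeping applies verbatim to $C_2$, because $\pi_1^{+(0,k,a_i)}$ and $\pi_2^{+(k-1,a_i+1,k)}$ still consist entirely of elements strictly below $k+\ell+1$ and, being obtained from $\pi_1,\pi_2$ by order-preserving relabellings, carry the same internal $\lmax$, $\rmax$, $\asc$, $\des$. The degenerate cases where $\pi_1$ or $\pi_2$ is empty follow the same analysis, the only change being that $k+\ell+1$ no longer has a neighbour on the missing side, which explains the absent $+1$ in $\asc(C_1(\epsilon,\pi_2))$ and $\des(C_1(\pi_1,\epsilon))$.

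The main obstacle, and where I would spend most of the care, is $\srecl$, since a pair ``$a$ precedes $a-1$'' in $\S(C_1)$ or $\S(C_2)$ can in principle cross a block boundary. For $C_1$, the formula from Proposition~\ref{prop:c1-valid} gives $\S(C_1(\pi_1,\pi_2))=\S(\pi_1)\cdot\S(\pi_2)^{+k}\cdot(k+\ell+1)$; the two blocks are separated by the jump from $\{1,\dots,k\}$ to $\{k+1,\dots,k+\ell\}$, so the only candidate cross-pair is $(k+1,k)$, but $k$ sits at the end of $\S(\pi_1)$ and therefore precedes no occurrence of $k+1$, and the trailing $k+\ell+1$ participates in no pair either. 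For $C_2$, Proposition~\ref{prop:c2-valid} gives $\S(C_2(\pi_1,\pi_2,i))=\S(\pi_1)^{+(0,k,a_i)}\cdot\S(\pi_2)^{+(k-1,a_i+1,k)}\cdot(k+\ell+1)$, and I would split the count into pairs internal to the first block, pairs internal to the second block, and pairs crossing the boundary. Recovering the internal counts $\srecl(\pi_1)$ and $\srecl(\pi_2)$ requires checking that the two pairs the shift could have broken, namely $(k,k-1)$ in $\S(\pi_1)$ and $(a_i+1,a_i)$ in $\S(\pi_2)$, do not in fact contribute to $\srecl(\pi_1)$ and $\srecl(\pi_2)$: the first because $k$ stands at the end of $\S(\pi_1)$, the second because $a_i$ is a left-to-right maximum of $\S(\pi_2)$, which forces $a_i+1$ to appear after $a_i$. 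Finally, the shift creates exactly one new boundary pair, $(k+a_i,\,k+a_i-1)$, where $k+a_i$ is the shifted image of $k$ and hence sits at the end of the first block while $k+a_i-1$ lies in the second block; this pair always contributes and accounts for the extra $+1$ in $\srecl(C_2)$.
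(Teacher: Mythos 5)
Your proof is correct and takes the same route the paper intends: the paper merely asserts that the proposition ``follows directly from the constructions'', and your verification --- in particular the $\srecl$ bookkeeping using that $\S(\pi_1)$ ends in its maximum $k$ and that $a_i$ is a left-to-right maximum of $\S(\pi_2)$, which is the only non-routine point --- supplies exactly the omitted details. One harmless slip: for the $C_1$ cross-pair $(k+1,k)$ the relevant observation is that $k+1$, lying in the second block, cannot \emph{precede} $k$, not that $k$ fails to precede $k+1$.
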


\section{Bijection with fighting fish}

In \cite{fighting-fish-enum}, there is a recursive construction of fighting fish called the \emph{wasp-waist decomposition}, which we briefly describe here (and illustrate in Figure~\ref{fig:ff-construct}) for the sake of self-containment. Readers are referred to \cite{fighting-fish-enum} for a detailed definition.

\begin{figure}
  \begin{center}
    \includegraphics[scale=1, page=3]{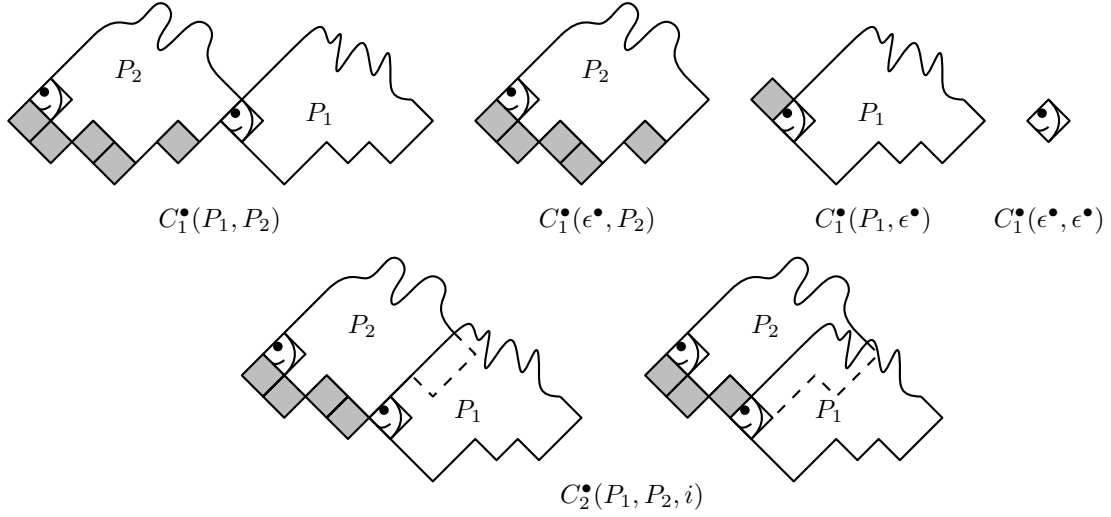}
  \end{center}
  \caption{Constructions $C^\bullet_1$ and $C^\bullet_2$ for fighting fish}
  \label{fig:ff-construct}
\end{figure}

Given two non-empty fighting fish $P_1$ and $P_2$, we build a new fighting fish $C^\bullet_1(P_1, P_2)$ as illustrated in the upper half of Figure~\ref{fig:ff-construct}, by gluing the upper left edge of the head of $P_1$ to the last edge of the fin of $P_2$, then add a new cell to each lower left free edge on the fin. We can also define $C^\bullet_1(P_1, P_2)$ for $P_1, P_2$ being empty (denoted by $\epsilon^\bullet$): $C^\bullet_1(\epsilon^\bullet,P_2)$ is obtained from $P_2$ by adding a new cell to each lower left free edge on the fin; $C^\bullet_1(P_1, \epsilon^\bullet)$ is $P_1$ with a new cell added to the upper left edge of its head; $C^\bullet_1(\epsilon^\bullet, \epsilon^\bullet)$ is the fighting fish with only the head. Now, suppose again that $P_1$ and $P_2$ are non-empty, and $P_2$ has fin-length $k+1$. We observe that $k \geq 1$, since the fin of a fighting fish has length at least $2$. We build $C^\bullet_2(P_1, P_2, i)$ with $1 \leq i \leq k$ as illustrated in the lower half of Figure~\ref{fig:ff-construct}. We first add a new cell to each lower left free edge among the first $k-i+1$ edges on the fin of $P_2$. Then, if the $(k-i+1)$-th edge $e$ is a lower right edge, we glue the head of $P_1$ to $e$, otherwise we glue the head of $P_1$ to the lower right edge of the new cell added to $e$.

It was proved in \cite{fighting-fish-enum} that every fighting fish can be uniquely constructed from $\epsilon^\bullet$ using the constructions $C^\bullet_1, C^\bullet_2$. We now look at some statistics on fighting fish. Given a fighting fish $P$, we denote by $\fin(P)$ the fin-length of $P$, by $\size(P)$ the number of lower free edges in $P$, by $\lsize(P)$ (resp. $\rsize(P)$) the number of left (resp. right) lower free edges in $P$, and by $\tails(P)$ the number of tails in $P$. We take the conventions that $\fin(\epsilon^\bullet)=\lsize(\epsilon^\bullet)=\rsize(\epsilon^\bullet)=\size(\epsilon^\bullet)=\tails(\epsilon^\bullet)=1$. We have the following observation from the definitions of $C^\bullet_1$ and $C^\bullet_2$.

\begin{prop} \label{prop:ff-stats}
  Given two non-empty fighting fish $P_1, P_2$, for any $i$ from $1$ to $\fin(P_2)-1$, we have
  \begin{align*}
    \fin(C^\bullet_1(P_1, P_2)) &= \fin(P_1) + \fin(P_2), \\
    \fin(C^\bullet_2(P_1, P_2, i)) &= \fin(P_1) + \fin(P_2) - i, \\
    \lsize(C^\bullet_1(P_1, P_2)) = \lsize(C^\bullet_2(P_1, P_2, i)) &= \lsize(P_1) + \lsize(P_2), \\
    \rsize(C^\bullet_1(P_1, P_2)) = \rsize(C^\bullet_2(P_1, P_2, i)) &= \rsize(P_1) + \rsize(P_2), \\
    \size(C^\bullet_1(P_1, P_2)) = \size(C^\bullet_2(P_1, P_2, i)) &= \size(P_1) + \size(P_2), \\
    \tails(C^\bullet_1(P_1, P_2)) &= \tails(P_1) - 1 + \tails(P_2), \\
    \tails(C^\bullet_2(P_1, P_2,i)) &= \tails(P_1) + \tails(P_2). \\
  \end{align*}
  Furthermore, the formulas for $C^\bullet_1(P_1, P_2)$ hold for $P_1$ or $P_2$ empty, except that $\lsize(C^\bullet_1(\epsilon^\bullet, P_2)) = \lsize(P_2)$, and $\rsize(C^\bullet_1(P_1, \epsilon^\bullet)) = \rsize(P_1)$.
\end{prop}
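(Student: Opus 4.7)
The proof is a bookkeeping exercise: each statistic involved is a sum over cells, edges, or vertices of the fighting fish, and both constructions are defined by (i) gluing a designated edge of $P_1$'s head against a designated edge on the fin of $P_2$, and (ii) attaching a controlled number of new cells along the fin. My plan is to fix each statistic in turn and, for each of the two constructions, partition the objects it counts in the output fish into those inherited from $P_1$, those inherited from $P_2$, and those introduced by the freshly attached cells, then read off the claimed identities from Figure~\ref{fig:ff-construct}.

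For $\lsize$, $\rsize$ and $\size$, the head edge of $P_1$ used in the gluing is an upper-left edge, so it is not counted by either $\lsize$ or $\rsize$, and no correction is needed on $P_1$'s side. On $P_2$'s side, the gluing removes one free fin edge, but the simultaneous attachment of a new cell at each lower-left free edge of the fin contributes back exactly the lost free edge of the same orientation; a direct local check of the two subcases (the glued edge is a lower-left or a lower-right edge) yields additivity of $\lsize$ and $\rsize$ for both $C^\bullet_1$ and $C^\bullet_2$, hence also of $\size$.

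For $\fin$, I would trace the new fin edge by edge in the figure. In $C^\bullet_1$ it starts at the nose of $P_2$, runs along the fin of $P_2$ (redirected locally at each newly attached cell but with length equal to the claimed total), crosses the gluing site and then follows the fin of $P_1$ to its first tail, giving $\fin(P_1) + \fin(P_2)$. In $C^\bullet_2$, since $P_1$'s head is glued against the $(k - i + 1)$-th fin edge of $P_2$, the new fin traverses only the first $\fin(P_2) - i$ edges of the fin of $P_2$ before crossing into the fin of $P_1$, producing the $-i$ correction.

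The tails count is the most delicate step and the main point of attention. In $C^\bullet_1$, the first tail of $P_2$, which is an endpoint of its last fin edge, is identified with a vertex of $P_1$'s head and so ceases to be a tail, while no other tail of $P_1$ or $P_2$ is affected and the newly attached cells (whose gluing forces one of their right edges to be non-free) create no new tail: this yields the $-1$ correction. In $C^\bullet_2$, the gluing takes place against a lower-right free edge (either of $P_2$ or of one of the freshly attached cells), and a local inspection around the gluing site shows that the first tail of $P_2$ survives and no extra tail is created, giving strict additivity. The degenerate cases for $C^\bullet_1$ follow from the same local analysis combined with the conventions $\fin(\epsilon^\bullet) = \lsize(\epsilon^\bullet) = \rsize(\epsilon^\bullet) = \size(\epsilon^\bullet) = \tails(\epsilon^\bullet) = 1$; the stated exceptions $\lsize(C^\bullet_1(\epsilon^\bullet, P_2)) = \lsize(P_2)$ and $\rsize(C^\bullet_1(P_1, \epsilon^\bullet)) = \rsize(P_1)$ arise precisely because in these cases the corresponding gluing step disappears.
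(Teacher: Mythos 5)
Your proposal is correct and takes essentially the same approach as the paper: the paper states this proposition as an observation that ``follows from the definitions of $C^\bullet_1$ and $C^\bullet_2$'' (and Figure~\ref{fig:ff-construct}) without a written proof, and your statistic-by-statistic bookkeeping is precisely the verification left implicit there. The only imprecisions are cosmetic and do not affect any count: the nose of the composite fish is the left vertex of the new cell attached below the old head (not the nose of $P_2$), and the edge receiving the head of $P_1$ is always a lower right edge --- the relevant case split in $C^\bullet_2$ is whether the $(k-i+1)$-th fin edge of $P_2$ is lower left or lower right.
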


Now we define our bijection $\phi$ recursively as follows, using both recursive decompositions of 2SSPs and fighting fish:
\begin{align}
  \begin{split} \label{eq:phi-def}
    \phi(\epsilon) &= \epsilon^\bullet, \\
    \phi(C_1(\pi_1, \pi_2)) &= C_1^\bullet(\phi(\pi_1), \phi(\pi_2)), \\
    \phi(C_2(\pi_1, \pi_2, i)) &= C_2^\bullet(\phi(\pi_1), \phi(\pi_2), i).
  \end{split}
\end{align}

We can now prove our main result.

\begin{proof}[Proof of Theorem~\ref{thm:main}]
  We will prove the conditions of $\phi$ between the set of 2SSPs with $\epsilon$ added and the set of fighting fish with the ``empty fish'' $\epsilon^\bullet$ added.

  We first prove by induction on $\len(\pi)$ that $\phi(\pi)$ is well-defined, with $\slmax(\pi) = \fin(\phi(\pi))-1$. The base case $\pi=\epsilon$ is clear. Now suppose that $\pi$ is not empty, and for every element $\pi' \in \mathcal{T}$ with $\len(\pi') < \len(\pi)$, we have $\phi(\pi')$ well-defined and $\slmax(\pi') = \fin(\phi(\pi))-1$. When $\pi = C_1(\pi_1, \pi_2)$, we see that $\phi(\pi)$ is well-defined. For the case $\pi = C_2(\pi_1, \pi_2, i)$, by induction hypothesis, we have $1 \leq i \leq \slmax(\pi_2) = \fin(\phi(\pi))-1$. Therefore, $\phi(\pi) = C^\bullet_2(\phi(\pi_1), \phi(\pi_2), i)$ is also well-defined. The equality $\slmax(\pi) = \fin(\phi(\pi))-1$ in both cases comes directly from Propositions~\ref{prop:c1-valid},~\ref{prop:c2-valid}~and~\ref{prop:ff-stats}. We thus conclude the induction. We note that, in the case $\pi = C_2(\pi_1, \pi_2, i)$ in the argument above, since $\slmax(\pi') = \fin(\phi(\pi))-1$, every possible value of $i$ in $C^\bullet_2(\phi(\pi_1), \phi(\pi_2), i)$ can be covered by some $\pi$. Therefore, combining with the fact that $C_1, C_2$ (resp. $C_1^\bullet, C_2^\bullet$) give unique construction of 2SSPs (resp. fighting fish), we conclude that $\phi$ is a bijection.

  To prove the correspondences of statistics $\len(\pi) + 1 = \size(\phi(\pi))$, $\asc(\pi) + 1 = \lsize(\phi(\pi))$, $\des(\pi) + 1= \rsize(\phi(\pi))$ and $\srecl(\pi) + 1 = \tails(\phi(\pi))$, we also proceed by induction on the length of $\pi$. We first check that all these agree with the (strange) conventions of 2SSPs and fighting fish. Then we conclude by comparing Proposition~\ref{prop:tsp-stats} against Proposition~\ref{prop:ff-stats}. Details are left to readers.
\end{proof}

Using our bijection, we also recover Proposition~\ref{prop:enum-refined} in a bijective way from known enumeration results on non-separable planar maps with $i$ vertices and $j$ faces in \cite{nsp-refined}. More precisely, these planar maps are sent to 2SSPs with $i$ descents and $j$ ascents by the bijection in \cite{goulden-west}, and then to fighting fish with $i$ right lower free edges and $j$ left lower free edges by our bijection $\phi$. The two statistics can be exchanged with map duality on non-separable planar maps, providing a combinatorial explanation of the symmetry.

\section{Generating function}

We now analyze the generating function of 2SSPs enriched with all the statistics we mentioned before, therefore also that of fighting fish. Let $T(t,x,u,v) \equiv T(t,x,u,v;p,q,s)$ be the generating function defined by
\[
T(t,x,u,v;p,q,s) = \sum_{n \geq 1} \sum_{\pi \in \mathcal{T}_n} t^{n} x^{\slmax(\pi)} u^{\lmax(\pi)} v^{\rmax(\pi)} p^{\asc(\pi)} q^{\des(\pi)} s^{\srecl(\pi)}.
\]
With the symbolic method, from Proposition~\ref{prop:tsp-stats} we have the following equation:
\begin{align}
  \begin{split} \label{eq:multi-stat}
    T(t,x,u,v) &= t x u v (1+qT(t,x,u,1))(1+pT(t,x,1,v)) \\
    &\hspace{3em} + t x u v p q s T(t,x,u,1)\frac{T(t,x,1,v) - T(t,1,1,v)}{x-1}.
  \end{split}
\end{align}

We notice that \eqref{eq:multi-stat} is similar to (2.1) in \cite{fighting-fish-enum}. We have the following result.

\begin{prop} \label{prop:algebraic}
  The generating function $T(t,x,u,v;p,q,s)$ is algebraic in its variables.
\end{prop}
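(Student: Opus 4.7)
The plan is to exploit the fact that in equation~\eqref{eq:multi-stat} the unknown $T(t,x,u,v)$ appears only on the left, and the right-hand side is polynomial in three ``slices''---$T(t,x,u,1)$, $T(t,x,1,v)$ and $T(t,1,1,v)$---together with the variables $t,x,u,v,p,q,s$. Consequently, the algebraicity of $T(t,x,u,v)$ reduces to the algebraicity of these three slices, and each slice in turn satisfies a functional equation with $x$ as the only catalytic variable, obtained by specialising $v=1$, $u=1$, or $u=v=1$ in \eqref{eq:multi-stat}.

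First I would handle the innermost equation, obtained by setting $u=v=1$. After clearing the denominator $x-1$ it becomes a polynomial equation linking $T(t,x,1,1)$ (which now appears quadratically through the product $(1+qT)(1+pT)$), the boundary value $T(t,1,1,1)$, and the variables $t,x,p,q,s$. This is exactly the setting of the classical quadratic method, and more generally of the Bousquet-M\'elou--Jehanne framework for polynomial functional equations with one catalytic variable, whose main theorem asserts that every such equation has an algebraic solution. Applying it yields the algebraicity of $T(t,x,1,1)$ and, as a special case, of $T(t,1,1,1)$.

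Next, the $v=1$ specialisation of \eqref{eq:multi-stat} is \emph{linear} in $T(t,x,u,1)$, with coefficients rational in $T(t,x,1,1)$, $T(t,1,1,1)$ and the variables. Solving this linear equation expresses $T(t,x,u,1)$ as a rational function of algebraic quantities, hence algebraic. The $u=1$ specialisation is similarly linear in $T(t,x,1,v)$, but now the boundary unknown $T(t,1,1,v)$ appears as an extra parameter. To pin it down I would invoke the kernel method: after clearing $x-1$, the coefficient of $T(t,x,1,v)$ is a polynomial in $x$ with algebraic coefficients and admits an algebraic formal-power-series root $X=X(t,v,p,q,s)$; cancellation of the numerator at $x=X$ then forces an algebraic relation determining $T(t,1,1,v)$. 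With the three slices all in hand, substituting them back into \eqref{eq:multi-stat} expresses $T(t,x,u,v)$ polynomially in algebraic functions, so it too is algebraic.

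The hard part will be the opening step: verifying that the polynomial equation at $u=v=1$ satisfies the non-degeneracy hypotheses needed to invoke the Bousquet-M\'elou--Jehanne theorem---equivalently, that the simultaneous vanishing of the discriminant in $T(t,x,1,1)$ and of its $x$-derivative along the relevant branch cuts out a well-defined algebraic curve $X(t)$ on which one can read off $T(t,1,1,1)$. Everything downstream is essentially linear algebra plus one further application of the kernel method, and the argument is constructive, producing in principle explicit annihilating polynomials for each intermediate slice and refining the algebraicity result of Bousquet-M\'elou~\cite{multistat}.
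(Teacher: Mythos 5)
Your proposal is correct and follows essentially the same route as the paper: specialize $u=v=1$ to get a quadratic equation in one catalytic variable $x$ solved by the quadratic method / Bousquet-M\'elou--Jehanne, then treat the $v=1$ and $u=1$ specializations as linear equations (the latter via the kernel method to pin down $T(t,1,1,v)$), and finally substitute back into \eqref{eq:multi-stat}. The only difference is cosmetic: you make explicit the kernel-method step and the non-degeneracy check that the paper leaves implicit.
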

\begin{proof}
  We solve (\ref{eq:multi-stat}) with the quadratic method in a way similar to that in \cite{multistat}, without giving computational details. We denote by $T_{abc}$ with $a,b,c \in \{0,1\}$ the specialization of $T(t,x,u,v)$ where $a=1$ (resp. $b=1$, $c=1$) stands for $x$ (resp. $u$, $v$) specialized to $1$. For instance, $T_{101}$ stands for $T(t,1,u,1)$. We now use this notation to construct the following system for $T$:
  \begin{align}
    T_{000} &= txuv(1+qT_{001})(1+pT_{010}) + txuvpqsT_{001} \frac{T_{010}-T_{110}}{x-1}, \label{eq:000} \\
    T_{010} &= txv(1+qT_{011})(1+pT_{010}) + txvpqsT_{011} \frac{T_{010}-T_{110}}{x-1}, \label{eq:010} \\
    T_{001} &= txu(1+qT_{001})(1+pT_{011}) + txupqsT_{001} \frac{T_{011}-T_{111}}{x-1}, \label{eq:001} \\
    T_{011} &= tx(1+qT_{011})(1+pT_{011}) + txpqsT_{011} \frac{T_{011}-T_{111}}{x-1}, \label{eq:011}
  \end{align}

  Equation~(\ref{eq:011}) is quadratic in $T_{011}$, with the catalytic variable $x$. Therefore, $T_{111}$ and $T_{011}$ is algebraic in related variables (see \cite{BMJ}), and can thus be solved using the quadratic method in particular. Then, Equation~(\ref{eq:001}) is linear in $T_{001}$, and depends further only on the known series $T_{111}$ and $T_{011}$. Therefore, $T_{001}$ is also algebraic in related variables. For Equation~(\ref{eq:010}), it is linear in $T_{010}$, with the catalytic variable $x$, and the equation depends further only on $T_{011}$, which is known to be algebraic. Therefore, $T_{110}$ and $T_{010}$ are both algebraic in all related variables. Finally, from Equation~(\ref{eq:000}) we know that $T_{000}$ is a polynomial in all variables and the algebraic series $T_{001}, T_{010}$ and $T_{110}$, therefore also algebraic itself.
\end{proof}

As a remark, the solution of \eqref{eq:multi-stat} is arguably simpler than that in \cite{multistat}, as there is only one divided difference.

\section{Discussion}

Our bijection $\phi$ is a first step towards a further combinatorial study of fighting fish and two-stack sortable permutations, whose properties are far from being well understood. For instance, flipping along the horizontal axis is an involution on fighting fish. Is this involution related to other involutions in related objects, such as map duality in non-separable planar maps, in a similar way as the case of $\beta$-(1,0) trees and synchronized intervals treated in \cite{duality}? How do all these involutions act on two-stack sortable permutations, which have no apparent involutive structure? We may also ask for recursive decompositions similar to the ones we have studied on other related objects. The conjecture at the end of \cite{fighting-fish-enum} also goes in this direction. As a final question, is there a non-recursive description or variant of the current presented recursive bijection? Such a direct variant would be useful in the structural study of related objects.

\section*{Acknowledgements}

The author thanks Mireille Bousquet-Mélou, Guillaume Chapuy and Gilles Schaeffer for their inspiring discussions and useful comments. The author also thanks the anonymous referees for their precious comments.

\bibliographystyle{alpha}
\bibliography{tsp-ff-fang}

\end{document}